\newcommand{\R}{\mathbb R}
\newcommand{\co}{\colon}
\newcommand{\pd}{\partial}
\renewcommand{\phi}{\varphi}
\newtheorem{lemma}{Lemma}[section]
\newtheorem{theorem}[lemma]{Theorem}
\newtheorem{proposition}[lemma]{Proposition}
\theoremstyle{remark}
\begin{document}

\title{On Helly's theorem in geodesic spaces}
\author{Sergei Ivanov}
\email{svivanov@pdmi.ras.ru}
\address{Sergei Ivanov: St.~Petersburg Department of Steklov Mathematical Institute,
Russian Academy of Sciences,
Fontanka 27, St.~Petersburg 191023, Russia}
\thanks{Supported by Russian Foundation for Basic Research
grant 14-01-00062.}

\begin{abstract}
In this note we show that Helly's Intersection Theorem holds
for convex sets in uniquely geodesic spaces (in particular in CAT(0) spaces)
without the assumption that the convex sets are open or closed.
\end{abstract}

\keywords{Helly's Theorem, covering dimension, uniquely geodesic space, CAT(0) space}

\subjclass[2010]{53C23, 52A35, 54F45}

\maketitle

\section{Introduction}

The classic Helly's Intersecton Theorem asserts the following:
If $\{A_i\}$ is a finite collection of convex sets in $\R^n$
such that every subcollection consisting of at most $n+1$ 
sets has a nonempty intersection, then $\bigcap A_i\ne\emptyset$.
This theorem has a topological generalization (found by Helly himself \cite{Helly30})
where convexity is replaced by the assumption that the sets $A_i$ and
their nonempty intersections are open homology cells.
See \cite{Deb70} for a modern proof and further references.

The proof of the topological Helly's theorem extends to CAT(0) 
spaces of geometric dimension~$n$, see e.g.\ \cite[Proposition 5.3]{Kle99}
and \cite[\S3]{Farb09}. 
Thus Helly's theorem holds for open convex sets in such spaces.
Once the theorem is established for open sets, the variant with closed convex sets follows.
In $\R^n$, one can deduce the theorem for arbitrary convex sets
by picking one point in every nonempty intersection and replacing every set
by the convex hull of the marked points it contains.
However this argument does not work in CAT(0) spaces since convex hulls of finite sets are
not necessarily closed.

In this note we show that Helly's theorem holds for arbitrary
(not necessarily open or closed)
convex sets in CAT(0) and some other spaces.
Namely we prove the following.

\begin{theorem}\label{t:main}
Let $X$ be a uniquely geodesic space of compact topological dimension $n<\infty$.
Let $\{A_i\}$ be a finite collection of convex sets in $X$
such that every subcollection of cardinality at most $n+1$ has a nonempty intersection.
Then $\bigcap A_i\ne\emptyset$.
\end{theorem}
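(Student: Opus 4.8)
The plan is to treat the Helly theorem for \emph{closed} convex sets (which the excerpt obtains from the topological Helly theorem) as a black box and to reduce the general statement to it. Writing $I$ for the finite index set, for each $S\subseteq I$ with $|S|\le n+1$ the hypothesis lets me choose a point $p_S\in\bigcap_{i\in S}A_i$. For each $i$ put $P_i=\{p_S : i\in S\}$ and $C_i=\operatorname{conv}P_i$, the geodesic convex hull of this finite set. Since $A_i$ is convex and contains $P_i$ we have $C_i\subseteq A_i$, so it suffices to prove $\bigcap_i C_i\ne\emptyset$; and since $p_S\in C_i$ whenever $i\in S$, the family $\{C_i\}$ inherits the Helly hypothesis. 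This reduces the theorem to the case in which every set is the convex hull of a finite set — precisely the situation where closedness can fail, which is the obstacle the note must overcome.

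Next I would pass to closures. All the $C_i$ lie in the compact set $K=\overline{\operatorname{conv}\bigl(\bigcup_i P_i\bigr)}$, a uniquely geodesic space of topological dimension $\le n$ (compactness of closed hulls is where the compact-dimension hypothesis is used). The closed convex sets $\overline{C_i}$ still satisfy the Helly hypothesis, so the closed-case theorem gives that $D:=\bigcap_i\overline{C_i}$ is a nonempty compact convex set. If $D$ meets $\bigcap_i C_i$ we are done; the entire difficulty is that a priori $D$ might be swallowed by the union of the ``boundary defects'' $\overline{C_i}\setminus C_i$.

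I would therefore localize the problem inside $D$. For each $i$ the set $E_i:=C_i\cap D$ is convex, and because $D\subseteq\overline{C_i}$ its closure in $D$ is all of $D$; that is, each $E_i$ is a convex subset that is \emph{dense} in the compact convex set $D$. The theorem now reduces to the following claim: finitely many convex subsets, each dense in a compact convex set $D$ of a uniquely geodesic space, have a common point. In $\R^n$ this is immediate, since a convex set dense in a convex body contains the relative interior of that body, so every $E_i$ contains the nonempty set $\operatorname{relint}D$, and hence $\bigcap_i E_i\supseteq\operatorname{relint}D\ne\emptyset$.

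I expect the main obstacle to be exactly this last claim in the abstract setting: one must produce a usable relative interior of a compact convex set in a uniquely geodesic space of dimension $n$, show it is nonempty, and show that any convex subset dense in $D$ must contain it. The natural route is dimension-theoretic — a dense proper convex subset of $D$ cannot omit a relatively open piece without either lowering its covering dimension or violating convexity — so this is where unique geodesics and the covering dimension $n$ enter essentially. Alternatively, one may feed the sets $E_i$ back into the topological Helly machinery, using that they and all their intersections are convex and hence contractible via the canonical geodesic homotopy, irrespective of their being neither open nor closed.
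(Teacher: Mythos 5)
Your proposal is built on the reduction that the paper's introduction explicitly flags as the one that \emph{fails} outside $\R^n$: replacing each $A_i$ by the convex hull of finitely many witness points and hoping to land in the closed case. The attempted repair has three genuine gaps. (1) The compactness of $K=\overline{\operatorname{conv}\bigl(\bigcup_i P_i\bigr)}$ is not available: a uniquely geodesic space need not be proper, and even in complete CAT(0) spaces it is a well-known open problem whether the closed convex hull of a finite set is compact; the hypothesis $\dim_c X=n<\infty$ supplies no compactness. Relatedly, the ``closed case'' you use as a black box is only obtained in the paper for CAT(0) spaces of finite geometric dimension, not for arbitrary uniquely geodesic spaces of compact topological dimension $n$, so it is not actually available in the required generality. (2) The density step is invalid: $D\subseteq\overline{C_i}$ says every point of $D$ is a limit of points of $C_i$, not of points of $C_i\cap D$, so $E_i=C_i\cap D$ need not be dense in $D$ --- already in $\R^2$ a convex set $D$ contained in $\overline{C}$ can meet $C$ in the empty set (take $C$ an open disk and $D$ a boundary point). (3) The claim you yourself identify as the crux --- that finitely many convex sets, each dense in a compact convex $D$, must have a common point --- is left unproven, and there is no workable notion of relative interior of a convex set in a general uniquely geodesic space to imitate the $\R^n$ argument. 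Your closing alternative, to ``feed the $E_i$ back into the topological Helly machinery,'' is circular: the classical machinery (Mayer--Vietoris) is exactly what requires openness, and a version that accepts arbitrary contractible sets is precisely what has to be proved.

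The paper sidesteps all of this with a direct combinatorial argument applied to the $A_i$ themselves. For $m=n+2$ sets it constructs, by induction over the skeleta of the standard $(n+1)$-simplex $\Delta$ and using only that each nonempty intersection $P_I=\bigcap_{i\notin I}A_i$ is contractible, a continuous map $f\co\Delta\to X$ with $f(\Delta_I)\subset P_I$ for every face $\Delta_I$; a Sperner/KKM-type lemma, valid for any Hausdorff space of compact topological dimension at most $n$, then forces the images of the $n+2$ facets --- each contained in the corresponding $A_i$ --- to have a common point, and the general $m$ follows by intersecting with $A_m$ and inducting. No openness, closedness, or compactness of any convex hull is ever needed. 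To salvage a reduction-style proof along your lines you would first have to establish the closed case in this generality and then resolve the dense-convex-subsets problem, each of which appears at least as hard as the direct argument.
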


\subsection*{Definitions}
Here are the definitions of terms used in Theorem~\ref{t:main}.

A \textit{geodesic space} is a metric space $X$ such that every
two points in $X$ belong to a segment, where a \textit{segment} is
a subset isometric to a compact interval of the real line.
We say that $X$ is \textit{uniquely geodesic} if
for every $x,y\in X$ there is a unique segment $[xy]\subset X$
with endpoints at $x$ and~$y$, and $[xy]$ depends continuously
on $x$ and~$y$. Note that the continuous dependence is automatic if $X$
is proper (i.e., if all closed balls are compact).

Examples of uniquely geodesic spaces are simply connected Riemannian
and Finsler manifolds without conjugate points,
CAT(0) spaces, balls of radius $\pi/2\sqrt{\kappa}$ in CAT($\kappa$) spaces
(see \cite[\S II.1]{BH99} for definitions),
Busemann convex spaces \cite{Bus48},
simply connected
polyhedral Finsler spaces with locally unique geodesics~\cite{BI13}.

The \textit{compact topological dimension} $\dim_c X$ of $X$
is defined by
$$
 \dim_c X = \sup \{ \dim K : \text{$K\subset X$ is compact} \}
$$
where $\dim$ is the Lebesgue covering dimension.
For (locally) CAT($\kappa$) spaces, the compact topological dimension
equals the geometric dimension and a number of other
dimension-like quantities \cite{Kle99}.

A set $A\subset X$ is \textit{convex} if it contains
all segments with endpoints in $A$.

\medskip
The proof of Theorem \ref{t:main} is topological,
the only feature of convex sets used in the proof is that they 
are contractible.
See Proposition~\ref{p:topohelly} for a purely topological formulation.
Proposition~\ref{p:topohelly}
is in some ways similar to the Topological Helly Theorem \cites{Deb70},
see also more general results in \cites{Bog,Mon}.
However, known proofs of the Topological Helly Theorem
involve computation of homology groups with techniques such as
Mayer--Vietoris sequences.
This approach requires the sets in question to be open or otherwise ``nice''.
It fails to work for arbitrary convex sets such as, for example,
a Euclidean ball with a wild subset of the boundary removed.

In contrast to this, the proof of Proposition~\ref{p:topohelly}
is a combinatorial argument which does not use algebraic topology
and does not require openness.
Note that for open sets results stronger than
Proposition~\ref{p:topohelly} are known, see \cite{Mon}.

\subsubsection*{Acknowledgement}
My interest in this problem was provoked by
a MathOverflow discussion \cite{MO150351} initiated by Misha Kapovich.
I am grateful to Roman Karasev who provided some references
and encouraged me to write down this note, and to Luis Montejano
for information about recent development in the area.

\section{Proof of the theorem}

Fix $n\ge 1$ and denote by $\Delta$ the standard $(n+1)$-dimensional simplex.
By definition, $\Delta$ is the convex
hull of the standard basis $\{e_i\}_{i=1}^{n+2}$ of $\R^{n+2}$.
Let $F_i$ denote the $i$th $n$-dimensional face of~$\Delta$
(i.e., the one not containing $e_i$).
For a positive integer $m$, we denote by $[m]$ the set $\{1,2,\dots,m\}$.

The following lemma is the only place in the proof where
the dimension of the ambient space is used.

\begin{lemma}\label{l:simplex}
Let $X$ be a Hausdorff space with $\dim_c X\le n$
and $f\co\Delta\to X$ a continuous map.
Then $\bigcap_{i=1}^{n+2} f(F_i)\ne\emptyset$.
\end{lemma}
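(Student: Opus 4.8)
The plan is to argue by contradiction, converting the emptiness of $\bigcap_{i=1}^{n+2}f(F_i)$ into an impossible face-preserving map of the simplex onto its boundary. Suppose $\bigcap_{i=1}^{n+2}f(F_i)=\emptyset$. Write $Y=f(\Delta)$; since $\Delta$ is compact and $X$ is a metric space, $Y$ is a compact subset of $X$, so $\dim Y\le\dim_c X\le n$. Each $B_i:=f(F_i)$ is then a compact, hence closed, subset of the metric space $Y$, and by assumption $\bigcap_i B_i=\emptyset$. First I would exploit the metric to build barycentric coordinates: set $\rho_i(y)=\dist(y,B_i)$ and $g_i=\rho_i/\sum_{j}\rho_j$. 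The denominator never vanishes (a common zero would lie in $\bigcap_i B_i$), so $g=(g_1,\dots,g_{n+2})\co Y\to\Delta$ is a well defined continuous map, and $g_i(y)=0$ exactly when $y\in B_i$; thus $g^{-1}(F_i)=B_i$ and $g^{-1}(\partial\Delta)=\bigcup_i B_i=f(\partial\Delta)$.

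Next I would record the crucial face compatibility of the composition. Since $f(F_i)=B_i=g^{-1}(F_i)$, we have $g\bigl(f(F_i)\bigr)\subseteq F_i$ for every $i$. This is where the dimension hypothesis enters. Set $A:=g^{-1}(\partial\Delta)$, a closed subset of $Y$, and regard $g|_A$ as a map $A\to\partial\Delta\cong S^n$. By the classical characterization of covering dimension (Hurewicz--Wallman / Alexandroff: a normal space $Y$ satisfies $\dim Y\le n$ if and only if every continuous map from a closed subset of $Y$ into $S^n$ extends over $Y$), the map $g|_A$ extends to a continuous $G\co Y\to\partial\Delta$. I would then set $\psi:=G\circ f\co\Delta\to\partial\Delta$. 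For $p\in F_i$ we have $f(p)\in B_i\subseteq A$, hence $\psi(p)=G(f(p))=g(f(p))\in F_i$; that is, $\psi$ maps $\Delta$ into $\partial\Delta$ while carrying each facet $F_i$ into itself.

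It then remains to show that no such $\psi$ can exist, which is the heart of the matter. Because each $F_i$ is convex and $\psi(F_i)\subseteq F_i$, the straight-line homotopy $H_t=(1-t)\,\mathrm{id}+t\,\psi$ stays inside $\partial\Delta$ when restricted to $\partial\Delta$: for $x$ with $x_i=0$ both $x$ and $\psi(x)$ lie in the convex face $F_i$, so the whole segment does. Hence $\psi|_{\partial\Delta}$ is homotopic in $\partial\Delta$ to the identity of $\partial\Delta\cong S^n$ and is therefore essential. On the other hand $\psi\co\Delta\to\partial\Delta$ extends $\psi|_{\partial\Delta}$ over the contractible simplex $\Delta$, forcing $\psi|_{\partial\Delta}$ to be null-homotopic --- a contradiction, since $\mathrm{id}_{S^n}$ is not null-homotopic (this is the only use of elementary algebraic topology). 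I expect the main obstacle to be the bookkeeping at the interface with dimension theory: confirming that $Y$ is a compact metric (hence normal) space to which the extension theorem applies, identifying the codomain $\partial\Delta$ with $S^n$ correctly, and verifying the face-preserving homotopy at points that lie on several facets $F_i$ simultaneously. Everything preceding the final topological fact is elementary and metric-combinatorial, in keeping with the spirit of the paper.
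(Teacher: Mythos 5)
Your argument is correct in substance, but it takes a genuinely different route from the paper's, and it needs one small repair: the lemma assumes only that $X$ is Hausdorff, not metric, so $\rho_i(y)=\dist(y,B_i)$ is not immediately available. The fix is easy --- $Y=f(\Delta)$ is a Hausdorff continuous image of a compact metric space and hence itself compact metrizable; alternatively, since $Y$ is compact Hausdorff (hence normal) and $\{Y\setminus B_i\}$ is a finite open cover, a subordinate partition of unity already gives a map $g\co Y\to\Delta$ with $g(B_i)\subseteq F_i$, which is all your argument really uses (you never need $g^{-1}(F_i)=B_i$ exactly, only that $f(\pd\Delta)\subseteq A$ and that $g$ carries each $B_i$ into $F_i$). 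With that settled, the comparison is this: the paper works directly from the covering definition of dimension --- refine the open cover $\{X\setminus f(F_i)\}$ to one of multiplicity at most $n+1$, regroup it into sets $U_i'$ with empty total intersection, and contradict a topological Sperner/KKM lemma applied to the pullbacks $f^{-1}(U_i')$ --- deliberately avoiding algebraic topology. You instead invoke the Alexandroff--Hurewicz--Wallman extension characterization of covering dimension (maps of closed subsets of $Y$ into $S^n$ extend over $Y$ when $\dim Y\le n$) to produce a face-preserving map $\psi\co\Delta\to\pd\Delta$, and then rule it out by the straight-line homotopy to the identity together with the essentiality of $\mathrm{id}_{S^n}$. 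Your route is cleaner on the surface but concentrates the content in two imported theorems (the extension characterization of $\dim$, and the fact that $\mathrm{id}_{S^n}$ is not null-homotopic, which itself requires homology or homotopy groups), whereas the paper's proof needs only the definition of covering dimension plus the combinatorial Sperner lemma; this is precisely the ``no algebraic topology'' point the introduction advertises. Both proofs are valid; yours is the classical no-retraction-style argument, the paper's is the more elementary one.
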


This lemma is apparently folklore.
It can be seen as a special case ($r=2$) of \cite[Theorem~1.1]{Kar12}.
Here we give a short proof based on Sperner's lemma.

\begin{proof}[Proof of Lemma \ref{l:simplex}]
We need the following fact: if $\{G_i\}_{i=1}^{n+2}$ is an open covering
of~$\Delta$ such that $G_i\cap F_i=\emptyset$ for each~$i$,
then $\bigcap G_i\ne\emptyset$. This fact is a topological variant of Sperner's lemma
and follows easily from the discrete counterpart.
Alternatively, it follows from the Knaster--Kuratowski--Mazurkiewicz lemma \cite{KKM}
which is a slightly more general statement about open or closed coverings of the simplex.

Now proceed with the proof of Lemma \ref{l:simplex}.
We may assume that $X$ is compact, otherwise take $f(\Delta)$ for~$X$.
Then $\dim X=\dim_c X\le n$.
Suppose, towards a contradiction, that $\bigcap f(F_i)=\emptyset$.
Then the sets $U_i=X\setminus f(F_i)$ form an open covering of $X$.
By the definition of the covering dimension, there exists an
open covering $\{V_j\}_{j\in J}$ refining $\{U_i\}$ and having
covering multiplicity at most $n+1$.
Let $U_i'$ be the union of those sets $V_j$ that are contained in $U_i$
but not in $U_1,\dots,U_{i-1}$. 
Since the covering multiplicity of $\{V_j\}$ is less than $n+2$,
we have $\bigcap_{i=1}^{n+2} U_i'=\emptyset$.

On the other hand, since $U_i'\subset U_i$ and $U_i\cap f(F_i)=\emptyset$,
the sets $G_i:=f^{-1}(U_i')$ satisfy
the assumptions of the topological Sperner's lemma stated above.
Hence 
$\bigcap G_i\ne\emptyset$
and therefore $\bigcap U_i'\ne\emptyset$, a contradiction.
\end{proof}

\begin{proposition}\label{p:topohelly}
Let $X$ be a contractible Hausdorff space with $\dim_c X=n<\infty$.
Let $\{A_i\}_{i=1}^m$ be a finite collection of contractible sets in $X$
such that the intersection of every subcollection is either contractible or empty.
Suppose that $m\ge n+2$ and for every set $I\subset[m]$ with $|I|=n+1$ one has
$\bigcap_{i\in I} A_i\ne\emptyset$.
Then $\bigcap_{i=1}^m A_i\ne\emptyset$.
\end{proposition}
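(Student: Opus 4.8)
The plan is to argue by induction on $m$, with the base case $m=n+2$ carrying the topological content through Lemma~\ref{l:simplex} and the inductive step reducing the number of sets. For the base case, set $B_j=\bigcap_{i\ne j}A_i$ for $j\in[n+2]$; as an intersection of $n+1$ sets it is nonempty by hypothesis, hence contractible. I would construct a continuous map $f\co\Delta\to X$ with the property that $f(F_j)\subset A_j$ for every $j$. Granting such an $f$, Lemma~\ref{l:simplex} yields a point of $\bigcap_{j=1}^{n+2}f(F_j)\subset\bigcap_{j=1}^{n+2}A_j$, which is exactly the desired conclusion.

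To build $f$, I would proceed by induction over the skeleta of $\Delta$, maintaining the invariant that each face $\sigma$ with vertex set $V(\sigma)\subset[n+2]$ is mapped into $T(\sigma):=\bigcap_{j\notin V(\sigma)}A_j$. Here $T(\sigma)$ is an intersection of $(n+2)-|V(\sigma)|\le n+1$ of the $A_i$ (and equals $X$ when $\sigma=\Delta$), so it is always contractible, and $T$ is monotone: $\tau\subset\sigma$ implies $T(\tau)\subset T(\sigma)$. On each vertex I would set $f(e_j)$ to be any point of $T(\{e_j\})=B_j$. Assuming $f$ is already defined on $\partial\sigma$ for a face $\sigma$ of dimension $d\ge1$, monotonicity gives $f(\partial\sigma)\subset T(\sigma)$; since $\partial\sigma$ is a $(d-1)$-sphere and $T(\sigma)$ is contractible, the restriction $f|_{\partial\sigma}$ is null-homotopic and therefore extends to a map $\sigma\to T(\sigma)$. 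Carrying this out up to the top-dimensional face produces the required $f$, and taking $\sigma=F_j$ (so that $V(\sigma)=[n+2]\setminus\{j\}$ and $T(\sigma)=A_j$) gives $f(F_j)\subset A_j$.

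For the inductive step, suppose $m\ge n+3$ and the statement holds for $m-1$ sets. First I would upgrade the hypothesis: applying the already-proved base case to each subcollection of size $n+2$ shows that every $(n+2)$-fold intersection $\bigcap_{i\in J}A_i$ with $|J|=n+2$ is nonempty. Then I would replace $A_{m-1}$ and $A_m$ by the single set $A_{m-1}\cap A_m$, obtaining a collection of $m-1$ sets. Each member is contractible (a $2$-fold intersection is nonempty since $2\le n+1$), every intersection of a subcollection coincides with an intersection of original sets and is thus contractible or empty, and every $(n+1)$-fold intersection of the new collection is nonempty: those not involving the merged set are original $(n+1)$-fold intersections, while those involving it are original $(n+2)$-fold intersections, nonempty by the upgraded hypothesis. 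The inductive hypothesis then gives $\bigl(\bigcap_{i=1}^{m-2}A_i\bigr)\cap(A_{m-1}\cap A_m)=\bigcap_{i=1}^m A_i\ne\emptyset$.

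The main obstacle is precisely the mismatch in the inductive step: merging two sets turns an $(n+1)$-fold intersection of the reduced family into an $(n+2)$-fold intersection of the original one, so the reduction cannot proceed from the bare hypothesis. The device that overcomes this is to bootstrap first from $(n+1)$-fold to $(n+2)$-fold intersections using the base case. Beyond this, the only delicate point is the extension step in the construction of $f$, which relies on contractibility of the intermediate sets $T(\sigma)$ to kill the obstruction to extending over each cell; this is exactly where the contractibility, and not merely the nonemptiness, of all subcollection intersections is essential.
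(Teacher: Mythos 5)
Your proof is correct and follows essentially the same route as the paper: the same skeletal construction of $f\co\Delta\to X$ into the nested contractible sets $T(\sigma)$ (the paper's $P_I$), the same appeal to Lemma~\ref{l:simplex}, and the same bootstrap from $(n+1)$-fold to $(n+2)$-fold intersections in the inductive step. The only cosmetic difference is that you merge two sets $A_{m-1}\cap A_m$ where the paper intersects every set with $A_m$; both reductions work identically.
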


\begin{proof}
First consider the case $m=n+2$.
For a nonempty set $I\subset [m]=[n+2]$, denote by $\Delta_I$ the convex hull
of $\{e_i\}_{i\in I}$ and let $P_I=\bigcap_{i\in[m]\setminus I} A_i$
if $I\ne[m]$. In addition, define $P_{[m]}=X$.
By the assumptions of the proposition, $P_I$ is contractible
for every nonempty set $I\subset[m]$.

We construct a continuous map
$f\co\Delta\to X$ such that $f(\Delta_I)\subset P_I$
for every $I\subset[m]$.
First for each $i\in[m]$ pick a point $f(e_i)=f(\Delta_{\{i\}})$
from the set $P_{\{i\}}$,
which is nonempty by the assumptions of the proposition.
Then extend the map by induction as follows.
Assuming that $f$ is already defined on the $(k-1)$-skeleton of~$\Delta$,
where $1\le k\le n+1$,
consider a $k$-simplex $\Delta_I$
where $I\subset[m]$, $|I|=k+1$.
Observe that $f(\pd\Delta_I)\subset P_I$
because $\pd\Delta_I=\bigcup_{i\in I}\Delta_{I\setminus\{i\}}$ and
$f(\Delta_{I\setminus\{i\}})\subset P_{I\setminus\{i\}}=P_I\cap A_i$
for every $i\in I$.
Since $P_I$ is contractible,
$f|_{\pd\Delta_I}$ can be extended to a continuous map from $\Delta_I$ to $P_I$.
Applying this extension procedure to all $k$-dimensional faces for $k=1,2,\dots,n+1$,
one gets the desired map $f\co\Delta\to X$.

By Lemma \ref{l:simplex}, we have $\bigcap_{i=1}^m f(F_i)\ne\emptyset$
where $F_i= \Delta_{[m]\setminus\{i\}}$.
By construction,
$f(F_i)\subset P_{[m]\setminus\{i\}}=A_i$
for each $i$,
therefore $\bigcap_{i=1}^m A_i\ne\emptyset$.
This completes the proof in the case $m=n+2$.

The general case follows by induction in $m$. 
Suppose that $m>n+2$ and a collection $\{A_i\}_{i=1}^m$
satisfies the assumptions of the proposition. 
Then, since the case $m=n+2$ is already done,
every subcollection of cardinality $n+2$
has a nonempty intersection.
Therefore the collection $\{A'_i\}_{i=1}^{m-1}$
where $A'_i=A_i\cap A_m$ satisfies the assumptions as well.
Applying the induction hypothesis to $\{A'_i\}$ yields that the intersection
$\bigcap_{i=1}^{m-1}A'_i=\bigcap_{i=1}^m A_i$ is nonempty.
\end{proof}


\begin{proof}[Proof of Theorem \ref{t:main}]
In a uniquely geodesic space all nonempty convex sets are contractible.
This is ensured by the requirement that segments
depend continuously on their endpoints.
Intersections of convex sets are obviously convex
and hence contractible.
Therefore Theorem~\ref{t:main} follows from Proposition~\ref{p:topohelly}.
\end{proof}

\begin{bibdiv}
\begin{biblist}

\bib{Bog}{article}{
 author={Bogatyi, S. A.},
 title={The topological Helly theorem},
 language={Russian}
 journal={Fundam. Prikl. Mat.},
 volume={8}, date={2002}, number={2}, pages={365--405},
}

\bib{BH99}{book}{
 author={Bridson, Martin R.}, author={Haefliger, Andr{\'e}},
 title={Metric spaces of non-positive curvature},
 series={Grundlehren der Mathematischen Wissenschaften [Fundamental Principles of Mathematical Sciences]},
 volume={319}, publisher={Springer-Verlag}, place={Berlin}, date={1999}, pages={xxii+643},
}

\bib{BI13}{article}{
 author={Burago, Dmitri}, author={Ivanov, Sergei},
 title={Polyhedral Finsler spaces with locally unique geodesics},
 journal={Adv. Math.}, volume={247}, date={2013}, pages={343--355},
}

\bib{Bus48}{article}{
 author={Busemann, Herbert},
 title={Spaces with non-positive curvature},
 journal={Acta Math.}, volume={80}, date={1948}, pages={259--310},
}

\bib{Deb70}{article}{
 author={Debrunner, H. E.},
 title={Helly type theorems derived from basic singular homology},
 journal={Amer. Math. Monthly}, volume={77}, date={1970}, pages={375--380},
}

\bib{Farb09}{article}{
 author={Farb, Benson},
 title={Group actions and Helly's theorem},
 journal={Adv. Math.}, volume={222}, date={2009}, number={5}, pages={1574--1588},
 eprint={arXiv:0806.1692},
}

\bib{Helly30}{article}{
 author={Helly, Eduard},
 title={\"Uber Systeme von abgeschlossenen Mengen mit gemeinschaftlichen Punkten},
 journal={Monatsh. Math. Phys.}, volume={37}, date={1930}, number={1}, pages={281--302},
}

\bib{Kar12}{article}{
 author={Karasev, Roman N.},
 title={A topological central point theorem},
 journal={Topology Appl.}, volume={159}, date={2012}, number={3}, pages={864--868},
 eprint={arXiv:1011.1802},
}

\bib{Kle99}{article}{
 author={Kleiner, Bruce},
 title={The local structure of length spaces with curvature bounded above},
 journal={Math. Z.}, volume={231}, date={1999}, number={3}, pages={409--456},
}

\bib{KKM}{article}{
 author={Knaster, B.}, author={Kuratowski, C.}, author={Mazurkiewicz, S.},
 title={Ein Beweis des Fixpunktsatzes f\"ur $n$-dimensionale Simplexe},
 journal={Fund. Math.}, volume={14}, number={1}, date={1929}, pages={132--137},
}

\bib{MO150351}{misc}{    
    title={Tverberg's theorem in CAT(0) spaces},
    author={Misha (\url{http://mathoverflow.net/users/21684})},
    note={URL: \url{http://mathoverflow.net/q/150351} (version: 2013-11-30)},
    organization={MathOverflow},
}

\bib{Mon}{article}{
 author={Montejano, Luis},
 title={A new topological Helly Theorem and some transversal results},
 status={preprint},
 date={2013},
 eprint={https://www.researchgate.net/publication/235626408},
}

\end{biblist}
\end{bibdiv}

\end{document}